\newtheorem*{theorem*}{Theorem} 
\newtheorem{theorem}{Theorem}[section]
\newtheorem{prop}[theorem]{Proposition}
\newtheorem{lemma}[theorem]{Lemma}
\newtheorem{cor}[theorem]{Corollary}
\theoremstyle{definition}
\newtheorem{definition}[theorem]{Definition}
\newtheorem{rem}[theorem]{Remark}
\newcommand{\ie}{{\it i.e.}\ }
\newcommand{\IC}{\mathbb{C}}
\newcommand{\IG}{\mathbb{G}}
\newcommand{\IL}{\mathbb{L}}
\newcommand{\IP}{\mathbb{P}}
\newcommand{\IR}{\mathbb{R}}
\newcommand{\IZ}{\mathbb{Z}}
\newcommand{\cD}{\mathcal{D}}
\newcommand{\cF}{\mathcal{F}}
\newcommand{\cM}{\mathcal{M}}
\newcommand{\cO}{\mathcal{O}}
\newcommand{\cP}{\mathcal{P}}
\newcommand{\cX}{\mathcal{X}}
\newcommand{\HH}{{\rm H}}
\DeclareMathOperator{\PGL}{PGL}
\DeclareMathOperator{\Span}{Span}
\DeclareMathOperator{\Aut}{Aut}
\DeclareMathOperator{\Mon}{Mon}
\DeclareMathOperator{\NS}{NS}
\DeclareMathOperator{\Orth}{O}
\DeclareMathOperator{\Inv}{T}
\DeclareMathOperator{\Def}{Def}
\DeclareMathOperator{\rank}{rank}
\newcommand{\id}{{\rm id}}
\newcommand{\KKK}{{\rm K3}}
\newcommand{\vol}{{\rm vol}}
\newcommand{\ample}{{\rm ample}}
\newcommand{\coloneqq}{:=}
\title[Involutions of the Hilbert square of a K3 surface]{On the nonsymplectic involutions\\ of the Hilbert square of a K3 surface}
\date{\today}
\author{}
\author[S. Boissi\`ere]{Samuel Boissi\`ere}
\address{Samuel Boissi\`ere, Universit\'e de Poitiers, 
Laboratoire de Math\'ematiques et Applications, 
UMR CNRS 7348,
T\'el\'eport 2 
Boulevard Marie et Pierre Curie, 
BP 30179,
86962 Futuroscope Chasseneuil Cedex, France}
\email{samuel.boissiere@math.univ-poitiers.fr}
\urladdr{http://www-math.sp2mi.univ-poitiers.fr/$\sim$sboissie/}
\author[A. Cattaneo]{Andrea Cattaneo}
\address{Andrea Cattaneo, Institut Camille Jordan, UMR 5208, Universit\'e Claude Bernard Lyon 1, 69622 Villeurbanne Cedex, France}
\email{cattaneo@math.univ-lyon1.fr}
\thanks{Andrea Cattaneo is supported by the LABEX MILYON (ANR-10-LABX-0070) of Universit\'e de Lyon, within the program ``Investissements d'Avenir'' (ANR-11-IDEX-0007) operated by the French National Research Agency (ANR)}
\author[D. Markushevich]{Dimitri Markushevich}
\address{Dimitri Markuchevitch, Universit\'e des Sciences et Technologies de Lille 1, Laboratoire Paul Painlev\'e, 
UMR CNRS 8524,
59655 Villeneuve d'Ascq Cedex}
\email{markushe@math.univ-lille1.fr}
\urladdr{}
\author[A. Sarti]{Alessandra Sarti}
\address{Alessandra Sarti, Universit\'e de Poitiers, 
Laboratoire de Math\'ematiques et Applications, 
UMR CNRS 7348,
T\'el\'eport 2 
Boulevard Marie et Pierre Curie,
BP 30179,
86962 Futuroscope Chasseneuil Cedex, France}
\email{sarti@math.univ-poitiers.fr}
\urladdr{http://www-math.sp2mi.univ-poitiers.fr/$\sim$sarti/}
\keywords{Irreducible holomorphic symplectic manifolds, nonsymplectic automorphisms, ample cone}
\begin{document}

\begin{abstract}
We investigate the interplay between the moduli spaces of ample $\langle 2\rangle$-polarized IHS manifolds of type $\KKK^{[2]}$ and of IHS manifolds of type $\KKK^{[2]}$ with a nonsymplectic involution with invariant lattice of rank one. In particular we geometrically describe some new involutions of the Hilbert square of a K3 surface, whose existence was proven in a previous work of  Boissi\`ere--Cattaneo--Nieper-Wi{\ss}kirchen--Sarti.
\end{abstract}

\maketitle

%%%%%%%%%%%%%%%%%%%%%%%%%%%%%%%%

\section{Introduction}

By a classical result of Saint-Donat~\cite{saintdonat}, every ample $\langle 2\rangle$-polarized complex K3 surface is a double cover of the complex projective plane branched along a
smooth sextic curve and it admits in a natural way a nonsymplectic involution. The cohomological invariant sublattice is generically
isometric to the rank one lattice $\langle 2\rangle$, generated by the pullback of the class of a line in the plane. The converse is also true: if a K3 surface admits a nonsymplectic involution whose invariant lattice has rank one, then this lattice is isometric to the lattice~$\langle 2\rangle$ and it is generated by an ample class, so the K3 surface can be constructed as a double cover of the plane branched along a smooth sextic curve.

The present paper focuses on the generalization of the above result  for a class of irreducible holomorphic symplectic (IHS) manifolds. 
These can be seen as a higher dimensional generalization of K3 surfaces, and share several properties with them. These are simply connected manifolds with a unique (up to scalar multiplication) holomorphic 2--form which is everywhere nondegenerate.
This implies that their dimension is even and that their canonical divisor is trivial. Moreover their second cohomology with integer coefficients is a lattice for the Beauville--Bogomolov--Fujiki quadratic form. One of the most studied families
of IHS manifolds is the $2n$-dimensional Hilbert scheme of $n$ points on a smooth complex projective K3 surface, which has a $20$--dimensional moduli space. 

The study of automorphisms of IHS manifolds was started by Arnaud Beauville~\cite{B_remarks, beauville1}, who generalized to IHS manifolds several results of Nikulin for automorphisms of K3 surfaces.
In this paper, we focus on \emph{ample $\langle 2 \rangle$-polarized} IHS manifolds of type $\KKK^{[2]}$, \ie deformation equivalent to the Hilbert square of a projective K3 surface (see Section~\ref{s:prel}). The first main result of this paper is Theorem~\ref{th:ample}, which generalizes Saint-Donat's result for K3 surfaces: 

\begin{theorem*}\text{}
\begin{enumerate}
\item Let $(X,D)$ be an ample $\langle 2\rangle$-polarized IHS manifold of type $\KKK^{[2]}$. Then $X$~admits a nonsymplectic involution $\sigma$ whose action on $\HH^2(X,\IZ)$ is the reflection in the class of $D$ in $\HH^2(X,\IZ)$.  
\item Conversely, let $X$ be an IHS manifold of type $\KKK^{[2]}$ with a nonsymplectic involution $\sigma$ whose invariant lattice $\Inv(\sigma)$ has rank $1$. Then $\Inv(\sigma)$ is generated by the class of an ample divisor $D$ of square $2$ and $\sigma$ acts on $\HH^2(X,\IZ)$ as the reflection in the class of $D$.
\end{enumerate} 
\end{theorem*}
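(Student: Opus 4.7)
The plan is to exhibit the involution at the level of $\HH^2(X,\IZ)$ and realise it geometrically via the Hodge-theoretic Torelli theorem of Verbitsky--Markman. Let $\iota \in \Orth(\HH^2(X,\IZ))$ be the integral map $\iota(\alpha) \coloneqq -\alpha + (\alpha, D)\,D$, fixing $\IZ D$ and equal to $-\id$ on $D^\perp$; this is well-defined precisely because $D^2 = 2$. Since $D$ is of Hodge type $(1,1)$, $\HH^{2,0}(X)$ lies in $D^\perp \otimes \IC$, and $\iota$ is a Hodge isometry acting as $-1$ on $\HH^{2,0}(X)$. The class $D$ being ample is a Kähler class fixed by $\iota$, so $\iota$ preserves the Kähler chamber containing $D$. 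The membership of $\iota$ in the monodromy group $\Mon^2(X)$ is then verified through Markman's criterion for the $\KKK^{[2]}$-type, which reduces to checking the action of $\iota$ on the discriminant $\HH^2(X,\IZ)^*/\HH^2(X,\IZ) \cong \IZ/2\IZ$ together with an orientation condition on the positive cone. The Torelli theorem yields a unique $\sigma \in \Aut(X)$ with $\sigma^* = \iota$, and nonsymplecticity is immediate from $\iota|_{\HH^{2,0}} = -\id$.

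\textbf{Approach to (2).} Conversely, suppose $\sigma$ is nonsymplectic with $\rank \Inv(\sigma) = 1$. For any ample $\alpha$ on $X$, the class $\alpha + \sigma^*\alpha$ is ample, $\sigma$-invariant, and nonzero, so it lies on the positive ray of $\Inv(\sigma) \otimes \IR$. Let $D$ be the primitive generator of $\Inv(\sigma)$ in this direction, which is then ample and primitive in $\HH^2(X,\IZ)$. The eigenspace decomposition $\HH^2(X,\IQ) = \Inv(\sigma)_\IQ \oplus \Inv(\sigma)_\IQ^\perp$ forces
\[
\sigma^*(x) = -x + \frac{2(x,D)}{D^2}\,D,
\]
i.e.\ $\sigma^*$ is the reflection fixing $D$. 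The integrality of this formula, together with the fact that a primitive class in the $\KKK^{[2]}$-lattice has divisibility $1$ or $2$, restricts $D^2$ to the set $\{2,4\}$; the case $D^2=4$ (necessarily with divisibility $2$) is ruled out by invoking the classification of rank-one invariant lattices of nonsymplectic involutions established in the paper of Boissi\`ere--Cattaneo--Nieper-Wi{\ss}kirchen--Sarti cited in the abstract.

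\textbf{Main obstacle.} The delicate point of (1) is the monodromy verification, which involves a case distinction according to the divisibility of $D$ and a careful computation of its action on the discriminant form of the $\KKK^{[2]}$-lattice. The same lattice-theoretic analysis is what ultimately excludes $D^2=4$ in (2): integrality of the reflection combined with ampleness is not by itself enough, and one truly needs the classification of invariant sublattices (or an equivalent monodromy calculation) to pin down $D^2 = 2$.
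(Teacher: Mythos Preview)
Your argument is correct and follows essentially the same route as the paper: construct the reflection, verify it is a monodromy Hodge isometry fixing a K\"ahler class, and invoke the Torelli theorem; for the converse, average an ample class and appeal to the lattice classification of nonsymplectic involutions.

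Two points are worth flagging. First, what you identify as the ``main obstacle'' in (1) is in fact no obstacle for the $\KKK^{[2]}$-type: by Markman's result one has $\Mon^2(X)=\Orth^+(\HH^2(X,\IZ))$ here, and since the discriminant group is $\IZ/2\IZ$ every integral isometry already acts trivially on it. The only thing to check is orientation, which is immediate because $\iota$ fixes the ample class $[D]$ and sends $\omega_X$ to $-\omega_X$; no case distinction on the divisibility of $D$ is needed. Second, in (2) the paper skips your integrality computation and cites the lattice classification directly to obtain $\Inv(\sigma)\cong\langle 2\rangle$; this classification is due to Boissi\`ere--Camere--Sarti, not the Boissi\`ere--Cattaneo--Nieper-Wi{\ss}kirchen--Sarti paper you name.
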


In Section~\ref{s:moduli}, we interpret this result as an isomorphism between the moduli space of ample $\langle 2\rangle$--polarized IHS manifolds of type $\KKK^{[2]}$ introduced by Gritsenko--Hulek--Sankaran~\cite{GHS} and recently reconsidered by Debarre--Macr\`i~\cite{DM} on one side, and the moduli space of IHS manifolds of type $\KKK^{[2]}$ with a nonsymplectic involution of invariant lattice of rank one on the other side. Then in Section~\ref{ss:defo} we show that in this setup, two automorphisms can always be deformed to each other, and in particular to a Beauville involution or to an O'Grady involution. The major part of these results where already known by experts but to our knowledge a complete proof was missing.

Section~\ref{s:new} is devoted to geometric constructions of nonsymplectic involutions with invariant lattice isometric to $\langle 2\rangle$. This is in general a difficult problem: in~\cite{bcns} these involutions are classified with the help of the Global Torelli theorem of Markman--Verbitsky~\cite{markman}, but until now the only geometric example was the Beauville involution (see Definition~\ref{def:beauville}). In \cite{bcns} the authors consider a generic projective K3 surface~$S$ with an ample polarization of square $2t$, with $t$ a positive integer, and by using Torelli theorem they show the existence of nonsymplectic involutions
on $S^{[2]}$ for certain values of $t$ (see~Section~\ref{s:new}). The question of constructing geometrically these involutions remained open and it is the second goal of this paper. We consider special K3 surfaces admitting two embeddings as quartics in $\IP^3$ and we show how one can use Beauville involutions to construct the nonsymplectic involution on the Hilbert scheme. 
Finally in Section~\ref{ss:nodesK3} we use nodal K3 surfaces to give different geometric constructions of Beauville involutions.

\subsection*{Acknowledgements} Part of the work was done during the 2015 Oberwolfach mini--workshop {\it Singular Curves on K3 Surfaces and Hyperk\"ahler Manifolds}. The authors thank this institution for the stimulating working atmosphere.

%%%%%%%%%%%%%%%%%%%%%%%%%%%%%%

\section{Preliminary notions}\label{s:prel}

A {\it lattice} $L$ is a free $\IZ$-module endowed with an integer valued, nondegenerate, symmetric bilinear form. A sublattice $M\subset L$ is called \emph{primitive} if $L/M$ is a free $\IZ$-module.

A compact complex K\"{a}hler manifold $ X $ is called {\it irreducible holomorphic symplectic} (IHS) if it is simply connected and if it admits a holomorphic  $2$-form~$\omega_X$ everywhere nondegenerate and unique up to scalar multiplication.
The existence of such a symplectic form immediately implies that the dimension of $X $ is even. 
The second cohomology group with integer coefficients $\HH^2(X,\IZ)$ is a lattice for the Beauville--Bogomolov--Fujiki~\cite{beauville1} quadratic form $q_X$. We denote by $\langle-,-\rangle_X$ the associated bilinear form.

The group $\Aut(X)$  of biholomorphic automorphisms of $X$ is a discrete group. An element $\sigma\in\Aut(X)$ is called \emph{symplectic} if $\sigma^\ast\omega_X=\omega_X$, and \emph{nonsymplectic} otherwise. In particular a nonsymplectic involution $\sigma$ is such that $\sigma^\ast\omega_X=-\omega_X$. The \emph{invariant lattice} of $\sigma\in\Aut(X)$ is the primitive sublattice $\Inv(\sigma)\subset \HH^2(X,\IZ)$ consisting  of the cohomology classes invariant by $\sigma^\ast$.

An IHS manifold $X$ is called \emph{of type $\KKK^{[2]}$} if it is deformation equivalent to the Hilbert scheme (or Douady space if nonalgebraic) of $0$-dimensional subschemes of length $2$ of  a  K3 surface. In this case, the lattice $(\HH^2(X,\mathbb{Z}),q_X)$ is isometric to
the lattice: 
$$
L\coloneqq U^{\oplus 3}\oplus E_8(-1)^{\oplus 2}\oplus \langle -2\rangle
$$
(by convention $U$ is the hyperbolic plane and the root lattice $E_8$ is positive definite).

The \emph{monodromy group} $\Mon^2(X)$ is the subgroup of $\Orth(\HH^2(X,\IZ))$ generated by the image of all monodromy representations of smooth proper holomorphic families with central fiber $X$. 
By Markman~\cite[Theorem 1.2]{Markmanintegral}, if $X$ is of type $\KKK^{[2]}$ then $\Mon^2(X)$ is equal to the subgroup $\Orth^+(\HH^2(X,\IZ))$ of isometries whose real extension preserves the orientation of any positive definite $3$-dimensional subspace of $\HH^2(X,\IR)$. In this context, a natural orientation of $X$ is  $\Span(\Re(\omega_X),\Im(\omega_X),\kappa)$ where $\kappa$~is a K\"ahler class. More generally, if $X$ and $Y$ are IHS manifolds in the same deformation class, an isometry $\varphi\colon \HH^2(X,\IZ)\to\HH^2(Y,\IZ)$ is called \emph{orientation-preserving} if it respects the orientation of any pair of positive definite $3$-dimensional subspaces of $\HH^2(X,\IZ)$ and $\HH^2(Y,\IZ)$. 

An \emph{ample $\langle 2\rangle$-polarized} IHS manifold is by definition a pair $(X,D)$ consisting of a projective IHS manifold $X$ and an ample divisor $D$ on $X$ of square~$2$. It has a canonical orientation given by the positive definite $3$-plane $\Span(\Re(\omega_X),\Im(\omega_X),[D])$.

We recall for later use a Hodge theoretic version of the Torelli theorem
in the special case of IHS manifolds of type $\KKK^{[2]}$:

\begin{theorem}\cite[Theorem~1.3,Theorem~9.1,Theorem~9.5,Theorem~9.8]{markman}\label{th:torelli}
Let $X$ and~$Y$ be IHS manifolds of type  $\KKK^{[2]}$ and $\varphi\colon \HH^2(X,\IZ)\to\HH^2(Y,\IZ)$ be an orientation-preserving isometry. Assume that $\varphi$ induces an isomorphism of Hodge structures between $\HH^2(X,\IC)$ and $\HH^2(Y,\IC)$ and that the image by~$\varphi$ of a K\"ahler class of $X$ is a K\"ahler class of $Y$. Then there exists an biregular isomorphism $f\colon X\to Y$ such that $f^\ast=\varphi$.
\end{theorem}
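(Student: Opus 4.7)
The strategy combines three ingredients: Markman's identification of the monodromy group $\Mon^2$ with the full orientation-preserving orthogonal group $\Orth^+$, Verbitsky's Hodge-theoretic global Torelli theorem realizing any Hodge isometry that is a parallel transport operator by a bimeromorphic map, and a K\"ahler cone argument to upgrade bimeromorphisms to biregular isomorphisms.

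First, I would argue that $\varphi$ is a parallel transport operator. Pick any marking $\eta_X\colon \HH^2(X,\IZ)\to L$ and set $\eta_Y\coloneqq \eta_X\circ\varphi^{-1}$, a marking of $Y$. Then the marked pairs $(X,\eta_X)$ and $(Y,\eta_Y)$ both lie in the moduli space $\cM_L$ of marked IHS manifolds of type $\KKK^{[2]}$. Since $\varphi$ is orientation-preserving, Markman's equality $\Mon^2=\Orth^+$ for $\KKK^{[2]}$-type manifolds guarantees that $\varphi$ is realized by a parallel transport along some path of marked pairs, and thus $(X,\eta_X)$ and $(Y,\eta_Y)$ lie in the same connected component $\cM_L^0$ of $\cM_L$.

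Next, I would invoke Verbitsky's global Torelli theorem. The Hodge-isometry hypothesis implies that $\eta_X$ and $\eta_Y$ carry the Hodge structures of $X$ and $Y$ to the same Hodge structure on $L_\IC$, hence $(X,\eta_X)$ and $(Y,\eta_Y)$ have the same image under the period map $\cM_L^0\to\Omega_L$. Verbitsky's theorem states that two marked pairs lying in a single fibre of the period map are bimeromorphic via a map compatible with the markings, yielding $f\colon X\dashrightarrow Y$ with $f^\ast=\varphi$ on second cohomology.

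Finally, to upgrade $f$ to a biregular isomorphism, one uses Markman's chamber decomposition of the positive cone by wall divisors: two marked pairs in one fibre of the period map define isomorphic complex manifolds if and only if their K\"ahler cones coincide inside $\HH^{1,1}(X,\IR)$. The hypothesis that $\varphi$ sends a K\"ahler class of $X$ to a K\"ahler class of $Y$ forces this identification of K\"ahler cones, so $f$ is biregular. The main obstacles are hidden in the two deep results invoked above: Markman's computation of $\Mon^2$, built on his theory of monodromy reflections and the geometry of Hilbert schemes, and Verbitsky's Torelli theorem itself, proved through hyperk\"ahler twistor families and ergodic arguments on Teichm\"uller space. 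The chamber upgrade, by contrast, is a relatively formal consequence of the description of wall divisors once those two facts are in hand.
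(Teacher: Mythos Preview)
The paper does not supply its own proof of this statement: Theorem~\ref{th:torelli} is simply quoted from Markman's survey \cite{markman}, with the relevant theorems listed in the citation, and no proof environment follows. So there is nothing in the paper to compare your argument against.

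That said, your sketch is a faithful outline of how the cited results assemble into the statement. The three steps you isolate --- (i) $\Mon^2=\Orth^+$ in the $\KKK^{[2]}$ case, so that an orientation-preserving isometry is a parallel transport operator; (ii) Verbitsky's global Torelli, giving a bimeromorphic map between marked pairs with the same period in a fixed component; (iii) the K\"ahler cone criterion promoting the bimeromorphic map to a biregular isomorphism --- are exactly the content of the theorems Markman numbers 1.2, 1.3, 9.1, 9.5, 9.8 in \cite{markman}. One minor refinement: in step (iii) the precise statement is not quite that the K\"ahler cones must coincide as subsets, but rather that a parallel-transport Hodge isometry is induced by an isomorphism if and only if it carries \emph{some} K\"ahler class to a K\"ahler class (Markman, Theorem~1.3). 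Your hypothesis provides this directly, so the conclusion follows without needing the full chamber decomposition.
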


\begin{rem}
Observe that in this setup, a Hodge isometry preserving a K\"ahler class is automatically orientation-preserving since $\omega_X$ is mapped to a complex multiple of~$\omega_Y$.
\end{rem}

%%%%%%%%%%%%%%%%%%%%%%%%%%%%

\section{Ample polarizations and nonsymplectic involutions}

This section is inspired by the following classical result of Saint-Donat~\cite{saintdonat} for  IHS manifolds of dimension $2$ already recalled in Introduction. Consider an ample $\langle 2\rangle$-polarized K3 surface $(S,D)$, where $D$~is an ample divisor of square $2$. Then the linear system $|D|$ is base point free and defines a double covering $S\to\IP^2$ branched along a smooth sextic curve. The covering involution $\sigma$ is nonsymplectic and it acts on $\HH^2(S,\IZ)$ 
as the reflection in the class of~$D$. If $(S,D)$ is generic in the moduli space of ample $\langle 2\rangle$-polarized K3 surfaces, the invariant sublattice of $\HH^2(S,\IZ)$ for the action of $\sigma$ is isometric to~$\langle 2\rangle$, generated by the divisor~$D$. The converse is true: any double covering of~$\IP^2$ branched
along a smooth sextic curve is an ample $\langle 2\rangle$-polarized K3 surface.
We extend this result as follows:

\begin{theorem}\label{th:ample}\text{}
\begin{enumerate}
\item\label{th:ample_item1} Let $(X,D)$ be an ample $\langle 2\rangle$-polarized IHS manifold of type $\KKK^{[2]}$. Then $X$~admits a nonsymplectic involution $\sigma$ whose action on $\HH^2(X,\IZ)$ is the reflection in the class of $D$ in $\HH^2(X,\IZ)$.  
\item\label{th:ample_item2} Conversely, let $X$ be an IHS manifold of type $\KKK^{[2]}$ with a nonsymplectic involution $\sigma$ whose invariant lattice $\Inv(\sigma)$ has rank $1$. Then $\Inv(\sigma)$ is generated by the class of an ample divisor $D$ of square $2$ and $\sigma$ acts on $\HH^2(X,\IZ)$ as the reflection in the class of $D$.
\end{enumerate} 
\end{theorem}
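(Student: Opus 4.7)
The proof has two directions, both handled via the Hodge-theoretic Torelli theorem.

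For part~(\ref{th:ample_item1}), the plan is to produce $\sigma$ by realizing a natural lattice involution geometrically. Define
\[
\rho_D\colon\HH^2(X,\IZ)\lra\HH^2(X,\IZ),\qquad v\longmapsto \langle v,D\rangle_X\,D-v,
\]
which is well-defined as an integral isometry because $\langle D,D\rangle_X = 2$ and $D$ is primitive; it fixes $D$ and acts as $-\id$ on $D^\perp$. I then verify the hypotheses of Theorem~\ref{th:torelli}: ampleness of $D$ gives $D\in\HH^{1,1}(X,\IR)$, hence $\langle\omega_X,D\rangle_X=0$ and $\rho_D(\omega_X) = -\omega_X\in\HH^{2,0}(X)$, so $\rho_D$ is a Hodge isometry; the class $[D]$ is K\"ahler and fixed by $\rho_D$, and the Remark after Theorem~\ref{th:torelli} then ensures orientation-preservation. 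Torelli produces a biregular $\sigma\colon X\to X$ with $\sigma^* = \rho_D$; since $\rho_D^2 = \id$ and the action of $\Aut(X)$ on $\HH^2(X,\IZ)$ is faithful for type $\KKK^{[2]}$ manifolds, $\sigma$ is an involution, and $\sigma^*\omega_X = -\omega_X$ makes it nonsymplectic.

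For part~(\ref{th:ample_item2}), let $D$ be a primitive generator of $\Inv(\sigma)$. Combining $\sigma^*\omega_X = -\omega_X$ with $\sigma^*D = D$ and the isometry property yields $\langle D,\omega_X\rangle_X = 0$, so $D\in\HH^{1,1}(X,\IR)$. For any K\"ahler class $\kappa$ on $X$, the average $\kappa + \sigma^*\kappa$ is K\"ahler and $\sigma^*$-invariant, hence lies in $\Inv(\sigma)\otimes\IR = \IR D$; after possibly replacing $D$ by $-D$, this makes $D$ a positive real multiple of a K\"ahler class, hence ample, and in particular $D^2>0$. Non-degeneracy of $\Inv(\sigma)$ then gives the rational orthogonal decomposition $\HH^2(X,\IQ) = \IQ D \oplus (D^\perp\otimes\IQ)$, on which $\sigma^*$ acts as $+\id$ and $-\id$ respectively; in other words $\sigma^* = \rho_D$ as above.

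It remains to force $D^2 = 2$. Rewriting $\rho_D(v) = -v + \tfrac{2\langle v,D\rangle_X}{D^2}D$, integer-valuedness on $\HH^2(X,\IZ)$ translates (using primitivity of $D$) to $D^2 \mid 2\,\mathrm{div}(D)$, where $\mathrm{div}(D)$ denotes the divisibility of $D$ in $\HH^2(X,\IZ)$; combined with $\mathrm{div}(D)\mid D^2$, this yields $D^2\in\{\mathrm{div}(D),\,2\,\mathrm{div}(D)\}$. Since the discriminant group $A_L$ of the $\KKK^{[2]}$-lattice $L$ is $\IZ/2\IZ$, we have $\mathrm{div}(D)\in\{1,2\}$; the case $\mathrm{div}(D)=2$ is excluded because $D/2$ would then represent the nontrivial class of $A_L$, whose value under the discriminant form is $-\tfrac{1}{2}\in\IQ/2\IZ$ (inherited from the $\langle -2\rangle$ summand of $L$), forcing $D^2\equiv -2\pmod 8$ and contradicting $D^2\in\{2,4\}$. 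Hence $\mathrm{div}(D)=1$, and evenness of $L$ rules out $D^2=1$, leaving $D^2 = 2$. I expect this final lattice-theoretic step to be the main obstacle: without the explicit structure of the discriminant of $L$ coming from the $\langle -2\rangle$ summand, the reflection $\rho_D$ would abstractly be compatible with larger values of $D^2$, so the $\KKK^{[2]}$-specific input is essential.
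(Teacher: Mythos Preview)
Your proof is correct. Part~(\ref{th:ample_item1}) is essentially identical to the paper's argument: both construct the reflection $\rho_D$, verify it is a Hodge isometry fixing the ample (hence K\"ahler) class $[D]$, invoke Theorem~\ref{th:torelli}, and use faithfulness of the representation $\Aut(X)\to\Orth(\HH^2(X,\IZ))$ to conclude $\sigma$ is an involution.

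Part~(\ref{th:ample_item2}) differs from the paper in a meaningful way. The paper appeals to the lattice-theoretic classification of nonsymplectic involutions in~\cite[Proposition~8.2]{bcs} to conclude directly that $\Inv(\sigma)\cong\langle 2\rangle$, then establishes projectivity via~\cite[Proposition~6]{B_remarks} and averages an \emph{ample} class to see that the generator $D$ is ample. You instead average a \emph{K\"ahler} class (bypassing the projectivity step), deduce that $\sigma^\ast$ is the reflection in $D$ from the rank-one hypothesis, and then compute $D^2=2$ by a self-contained divisibility argument exploiting that the discriminant group of $L$ is $\IZ/2\IZ$ with discriminant form taking value $-\tfrac12\in\IQ/2\IZ$ on the nontrivial element. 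Your route is more elementary in that it avoids the full classification of~\cite{bcs}, and it makes transparent exactly where the $\KKK^{[2]}$-specific lattice structure enters (as you yourself note). The paper's route is shorter on the page but leans on a heavier external input. Both are valid; yours would be preferable in a context where one wants to minimize dependence on the classification result.
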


\begin{proof}\text{}
\begin{enumerate}
\item Consider the reflection  
$$
\varphi\colon\HH^2(X,\IZ)\to \HH^2(X,\IZ), \quad v\mapsto \langle v,[D]\rangle_X \cdot [D]-v,
$$
whose invariant lattice $\IZ[D]$ is isometric to $\langle 2\rangle$. We know that $\varphi\in\Mon^2(X)$ by~\cite[Corollary~1.8]{Markman_monodromyK3}, but this can be checked again as follows using the characterization of $\Mon^2(X)$ recalled above. Since $\HH^{2,0}(X)=\IC\omega_X$ is orthogonal to the algebraic class $[D]$, $\varphi$ acts as multiplication by $(-1)$ on $\HH^{2,0}(X)$, 
so $\varphi$~is a Hodge isometry. Since $\varphi$ leaves the ample class $[D]$ invariant, it preserves the orientation of the positive definite $3$-dimensional 
subspace $\Span(\Re(\omega_X),\Im(\omega_X),[D])$ of $\HH^2(X,\IR)$, so $\varphi\in\Mon^2(X)$. By Theorem~\ref{th:torelli}, there exists $\sigma\in\Aut(X)$
such that $\sigma^*=\varphi$. Since the map $\Aut(X)\to \Orth(\HH^2(X,\IZ))$ is injective (see \cite[Lemma 1.2]{mongardinat}), $\sigma$ is an involution.
\item Looking at the lattice theoretical classification of nonsymplectic involutions acting on IHS manifolds of type $\KKK^{[2]}$ (see~\cite[Proposition 8.2]{bcs}), we know that the invariant lattice $T(\sigma)$ is isometric to the lattice $\langle 2\rangle$, generated by the class of a divisor~$D$ of square~$2$. Since $X$~admits a nonsymplectic automorphism, it is projective (see \cite[Proposition 6]{B_remarks}). Consider an ample class $\ell$ of $X$. The class $\ell+\sigma^*(\ell)$ is ample and $\sigma$-invariant, so it is a multiple of the generator~$[D]$ of the invariant lattice, hence $D$ is ample. Using the first assertion we get that $\sigma$ is the reflection in the class $[D]$.
\end{enumerate}
\end{proof}

\begin{rem}
One can state very similar results for IHS manifolds of type $K3^{[n]}$, $n\geq 3$ (see Alberto Cattaneo~\cite{alberto}).
Compared to the $2$-dimensional situation, in this $4$-dimensional setup the full understanding of the linear system $|D|$ of the ample class of square $2$ remains open (see~\cite{bcns}).
\end{rem}

%%%%%%%%%%%%%%%%%%%%%%%%%%%%%%%%

\section{Modular interpretation}\label{s:moduli}

We interpret the result above as an isomorphism between two moduli spaces. We fix a primitive embedding 
$j\colon\langle 2\rangle\hookrightarrow L$. Such an embedding is unique up to an isometry 
of $L$, see~\cite[Proposition\ 8.2]{bcs}. We denote by $\rho\in\Orth(L)$ the reflection in the ray $j(\langle 2\rangle)$.

By Gritsenko--Hulek--Sankaran~\cite[\S3]{GHS_moduliK3} (see also Debarre--Macr\`i~\cite[\S3.1]{DM}), there exists a quasi-projective coarse moduli space $\cM_{\langle 2\rangle}$, which is irreducible and $20$-dimensional, parametrizing pairs $(X,\iota)$ where 
$X$ is an IHS manifold of type $\KKK^{[2]}$ and $\iota\colon \langle 2\rangle\hookrightarrow\NS(X)$ is a primitive embedding, with an open subspace $\cM_{\langle 2\rangle}^\ample$ parametrizing pairs $(X,\iota)$ such that $\iota(\langle 2\rangle)$ contains the class of an ample divisor.

By results of Verbitsky (see~\cite{DM} and references therein), there is an algebraic period map $\cP_{\langle 2\rangle}\colon\cM_{\langle 2\rangle}\to \cD_{\langle 2\rangle}$, mapping $(X,\iota)$ to its period $\HH^{2,0}(X)$, which is an open embedding, where $\cD_{\langle 2\rangle}$ is an irreducible algebraic variety.

By Boissi\`ere--Camere--Sarti~\cite[Theorem~4.5,Theorem~5.6]{BCS_ball}, there exists a quasi-projective coarse moduli space $\cM_{\langle 2\rangle}^\rho$, which is irreducible and $20$-dimensional, parametrizing triples $(X,\sigma,\iota)$ where 
$X$ is an IHS of type $\KKK^{[2]}$, $\iota\colon \langle 2\rangle\hookrightarrow\NS(X)$ is a primitive embedding and $\sigma\in\Aut(X)$ is  a nonsymplectic involution whose action on $\HH^2(X,\IZ)$ is conjugated to $\rho$.

\begin{cor}\label{prop:modspace}
 The moduli spaces $\cM_{\langle 2\rangle}^\rho$ and $\cM_{\langle 2\rangle}^\ample$ are isomorphic.
\end{cor}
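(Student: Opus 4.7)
The plan is to exhibit the natural forgetful map $F\colon\cM_{\langle 2\rangle}^\rho\to\cM_{\langle 2\rangle}^\ample$ sending $(X,\sigma,\iota)\mapsto(X,\iota)$, check that it is a bijection on closed points using the two parts of Theorem~\ref{th:ample}, and then upgrade this bijection to an isomorphism of quasi-projective varieties by means of the period maps.

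First, I would verify that $F$ is well-defined and lands in the ample locus. Since the action of $\sigma^*$ on $\HH^2(X,\IZ)$ is conjugate to $\rho$, which is the reflection in $j(\langle 2\rangle)$, we have $\rank\Inv(\sigma)=1$, so Theorem~\ref{th:ample}(\ref{th:ample_item2}) applies and $\Inv(\sigma)$ is generated by an ample class of square~$2$. Compatibility with the marking identifies this class with a generator of $\iota(\langle 2\rangle)$, so $(X,\iota)\in\cM_{\langle 2\rangle}^\ample$. The coarse moduli property ensures that forgetting $\sigma$ in any algebraic family of triples yields an algebraic morphism.

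Bijectivity on closed points is then immediate from Theorem~\ref{th:ample}. Surjectivity follows from part~(\ref{th:ample_item1}): for $(X,\iota)\in\cM_{\langle 2\rangle}^\ample$ with ample generator $D$ of $\iota(\langle 2\rangle)$, we obtain a nonsymplectic involution $\sigma$ whose cohomological action is the reflection in $[D]$, conjugate to $\rho$ under any marking identifying $\iota$ with $j$. Injectivity follows from the injectivity of $\Aut(X)\to\Orth(\HH^2(X,\IZ))$ already used in the proof of Theorem~\ref{th:ample}: two involutions inducing the same cohomological reflection coincide, so $\sigma$ is uniquely determined by $(X,\iota)$.

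Finally, to promote this bijection to an isomorphism of varieties, I would invoke the period maps on both sides. The map $\cP_{\langle 2\rangle}\colon\cM_{\langle 2\rangle}\to\cD_{\langle 2\rangle}$ is an open embedding by Verbitsky's theorem, and the period map $\cM_{\langle 2\rangle}^\rho\to\cD_{\langle 2\rangle}$ constructed in \cite{BCS_ball} is also an open embedding into the same period domain, since the anti-invariant sublattice of $\rho$ in $L$ coincides with $j(\langle 2\rangle)^\perp$. The morphism $F$ manifestly commutes with these period maps, identifying both moduli spaces with the same open subset of $\cD_{\langle 2\rangle}$. The main obstacle is precisely this last identification of period domains and the verification that $F$ intertwines the two period maps; once this algebraic compatibility is secured, the open-embedding property together with bijectivity forces $F$ to be an isomorphism.
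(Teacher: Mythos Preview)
Your proposal is correct and follows essentially the same approach as the paper: both directions of the bijection come directly from the two parts of Theorem~\ref{th:ample}, exactly as in the paper's proof. The paper is in fact terser than you are---it simply exhibits the two inverse maps on points and stops, without addressing the upgrade from bijection to isomorphism of quasi-projective varieties; your additional paragraph invoking the compatible open embeddings into $\cD_{\langle 2\rangle}$ via the period maps is a welcome clarification that the paper leaves implicit.
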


\begin{proof} The natural map $\cM^\rho_{\langle 2\rangle}\to\cM_{\langle 2\rangle}^\ample$ is given by Theorem~\ref{th:ample}(\ref{th:ample_item2}).
Conversely, starting from $(X,\iota)\in  \cM_{\langle 2\rangle}^\ample$, denote by $D$ an ample divisor generating $\iota(\langle 2\rangle)$. By
Theorem~\ref{th:ample}(\ref{th:ample_item1}) the pair $(X,D)$ admits a nonsymplectic involution~$\sigma$ such that $\sigma^\ast$~is the reflection in the ray~$\IZ[D]$. Then $\sigma^\ast$ is conjugated to $\rho$ so $(X,\sigma,\iota)\in\cM_{\langle 2\rangle}^\rho$.
\end{proof}

\section{Deformation of nonsymplectic involutions}

We exhibit two famous families of nonsymplectic involutions. The first one, the \emph{Beauville family}, gives a codimension one subspace of $\cM^\rho_{\langle 2\rangle}$. The second one, the \emph{O'Grady family}, is dense in $\cM^\rho_{\langle 2\rangle}$.

\subsection{The Beauville family}\label{ss:beauville}

Let $S$ be a projective K3 surface. Recall that the N\'eron--Severi group of the Hilbert scheme $S^{[2]}$ of $2$ points on $S$ admits an orthogonal decomposition 
$$
\NS(S^{[2]})\cong\NS(S)\oplus\IZ\delta,
$$ 
where $2\delta$ is the class of the exceptional divisor of $S^{[2]}$, with $q_{S^{[2]}}(\delta)=-2$.

\begin{definition}\label{def:beauville} A \emph{Beauville involution} on $S^{[2]}$ is a nonsymplectic involution~$\sigma$ whose invariant lattice has rank~$1$, generated by the class of an ample divisor~$D$ of square~$2$, 
which decomposes as $[D]=[H]-\delta$ where $H$ is a very ample divisor of square $4$ on $S$.
\end{definition}

Beauville involutions are geometrically realized as follows.
Let $S$~be a smooth quartic K3 surface in $\IP^3$ containing no line
and $|H|$ be the linear system of the hyperplane section. The line $\ell$ through a 
subscheme $\xi\in S^{[2]}$ cuts $S$ in a residual length two subscheme, providing the involution $\sigma$
(see Beauville~\cite{B_remarks} for details). The construction of such nonsymplectic involutions depend on $19$ parameters, corresponding similarly as above to the moduli of ample $\langle 4\rangle$-polarized K3 surfaces.

\subsection{The O'Grady family}

We briefly recall the construction of double EPW sextics~\cite{ogradydouble}. Let $V$ be a $6$-dimensional complex vector space. We choose an isomorphism 
$\vol\colon \bigwedge^6 V \to \IC$, thus making $\bigwedge^3 V$ a symplectic vector space by  $(\alpha, \beta) \coloneqq \vol(\alpha\wedge \beta)$. The $10$-dimensional Lagrangian subspaces 
$$
\cF_v=\left\{v\wedge \alpha \,|\, \alpha\in \bigwedge^2 V\right\}\cong \bigwedge^2 \left(V/\IC v\right),\quad v\in V,
$$
are the fibers of the sheaf $\cF = \cO_{\IP(V)}(-1) \otimes \bigwedge^2 Q$, where $Q$ is the tautological bundle.
By a Chern classes computation one obtains that $c_1(\cF)=-6c_1(\cO_{\IP(V)}(1))$.

Let $A \subseteq \bigwedge^3 V$ be a lagrangian subspace. By inclusion and projection one gets a map of vector bundles:
$$
\lambda_A: \cF\to \bigwedge^3 V\otimes \cO_{\IP(V)}\to \frac{\bigwedge^3 V}{A}\otimes \cO_{\IP(V)},
$$
whose degeneracy locus $Y_A$ is the subscheme of zeros of $\det (\lambda_A)$, which is a global section of the sheaf $\det (\cF^{\vee})=\cO_{\IP(V)}(6)$. 
If $Y_A$ is a proper subscheme of $\IP(V)$,  it is thus a sextic hypersurface called an {\it EPW sextic}~\cite{EPW,ogradydouble}.

Clearly $Y_A = \{ [v] \in \IP(V) \,|\, \dim (\cF_v \cap A) \geq 1 \}$.
For $A$ generic in the Grassmannian variety of Lagrangian subspaces of $\IP(V)$, the fourfold $Y_A$ has only double points on the smooth surface
$W_A \coloneqq\{ [v] \in \IP(V)\,|\, \dim (\cF_v \cap A) \geq 2 \}$.

By O'Grady~\cite[Theorem~1.1]{4foldsEPW}, in this situation $Y_A$ admits a smooth double cover $X_A \to Y_A$ branched along $W_A$, which is an IHS manifold of type $\KKK^{[2]}$ called a \emph{double EPW sextic}. 
By construction,  $X_A$ comes together with its covering involution, which is nonsymplectic, and it leaves invariant an ample class of square~$2$. We call this an \emph{O'Grady involution}. 
This family depends on $20$ parameters, corresponding to the dimension of the GIT quotient $\IL\IG(\wedge^3V)\sslash\PGL(V)$.

\subsection{Deformation equivalences}\label{ss:defo}

Recall that two pairs $(X_1,f_1)$ and $(X_2,f_2)$ of IHS manifolds of the same deformation type, endowed with a nonsymplectic automorphism, are \emph{deformation equivalent} (see~\cite[\S4]{bcs},\cite[Definition~4.5]{joumaah}) if there exists a smooth and proper family $\pi\colon\cX\to\Delta$ of IHS manifolds over a connected smooth analytic space $\Delta$, with a fiber-preserving holomorphic automorphism $F$ which is nonsymplectic on each fiber $\cX_t$, $t\in\Delta$, two points $t_1,t_2\in\Delta$ and isomorphisms $\varphi_i\colon \cX_{t_i}\to X_i$ such that $\varphi_i\circ F_{t_i}=f_i\circ \varphi_i$ for $i=1,2$ (we denote  by $F_t$ the restriction of $F$ on each fiber $\cX_t$).

\begin{theorem}\label{th:defo}
Any two points $(X,i_X),(Y,i_Y)\in\cM_{\langle 2\rangle}^\rho$ are deformation equivalent.
\end{theorem}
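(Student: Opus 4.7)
The plan is to reduce the theorem to the irreducibility of $\cM_{\langle 2\rangle}^\rho$ together with the smoothness of the local deformations of a pair $(Z,\tau)$, and then to glue these local versal families along a chain that joins $(X,i_X)$ to $(Y,i_Y)$.

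First, I would observe that $\cM_{\langle 2\rangle}^\rho$ is irreducible, hence path-connected: by Corollary~\ref{prop:modspace} it is isomorphic to $\cM_{\langle 2\rangle}^\ample$, an open subset of the irreducible coarse moduli space $\cM_{\langle 2\rangle}$ recalled at the beginning of Section~\ref{s:moduli}. So any two of its points are joined by a continuous path $\gamma$.

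Next, at every point $(Z,\tau,j) \in \cM_{\langle 2\rangle}^\rho$ I would consider the local Kuranishi space $\Def(Z)$, which is smooth of dimension $h^{1,1}(Z)=21$. The involution $\tau$ extends to a holomorphic involution of $\Def(Z)$ whose fixed subgerm $\Def(Z,\tau)$ is smooth, with tangent space $\HH^1(Z,T_Z)^{\tau^*}$. Contraction with $\omega_Z$ gives an identification $\HH^1(Z,T_Z) \cong \HH^{1,1}(Z)$ that is $\tau^*$-anti-equivariant (since $\tau^*\omega_Z = -\omega_Z$); hence $\HH^1(Z,T_Z)^{\tau^*}$ corresponds to the $(-1)$-eigenspace of $\tau^*$ on $\HH^{1,1}(Z)$, which is $20$-dimensional because its $(+1)$-eigenspace is the line $\IC[D]$ generated by the polarization. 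Thus $\Def(Z,\tau)$ has dimension $20 = \dim \cM_{\langle 2\rangle}^\rho$, and it carries a smooth versal family of IHS manifolds of type $\KKK^{[2]}$ with a fiberwise nonsymplectic involution conjugated to $\rho$; the polarization $j$ extends uniquely since $[D]$ generates the $\tau^*$-invariant part of the Picard lattice on every fiber.

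Finally, I would cover the path $\gamma$ by finitely many such local charts $\Delta_1,\dots,\Delta_n$ and glue them along their overlaps into a smooth connected analytic base $\Delta$ carrying a proper family $\pi\colon \cX \to \Delta$ with a fiberwise nonsymplectic involution $F$, two of whose fibers realize $(X,i_X)$ and $(Y,i_Y)$. The hard part will be this gluing step, because $\cM_{\langle 2\rangle}^\rho$ is only a \emph{coarse} moduli space: on overlaps, two local versal families parametrize the same pair $(Z,\tau)$ only up to the finite group of automorphisms of $Z$ commuting with $\tau$, and the Hodge-theoretic Torelli theorem (Theorem~\ref{th:torelli}) identifies them only up to this finite ambiguity. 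To conclude one has to rigidify, for instance by passing to a connected finite étale cover of $\cM_{\langle 2\rangle}^\rho$ with level structure, or by directly invoking the deformation-theoretic framework of Joumaah~\cite[Definition~4.5]{joumaah} developed precisely for IHS manifolds with a nonsymplectic involution.
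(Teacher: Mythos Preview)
Your approach is essentially the same as the paper's: local deformation spaces $\Def(Z,\tau)$ of dimension $20$, glued over the connected moduli space. Your dimension count via the anti-equivariance of contraction with $\omega_Z$ is more detailed than the paper's, which simply cites~\cite[\S4]{bcs}.

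Where you diverge is at the gluing step. You flag it as the hard part and propose to resolve the finite-automorphism ambiguity by passing to an \'etale cover with level structure or by invoking Joumaah wholesale. The paper instead uses a fact already recalled in Section~\ref{s:moduli}: the period map $\cP_{\langle 2\rangle}\colon\cM_{\langle 2\rangle}\to\cD_{\langle 2\rangle}$ is an open \emph{embedding}. Hence on the overlap of two local charts the fibers are the same IHS manifold, and the involution on each fiber is uniquely determined by the period (it is the reflection in the unique ample class of square~$2$, by Theorem~\ref{th:ample}). So $F$ and $F'$ agree on $U\cap U'$ and glue with no further work. Your worry about the coarse moduli space and commuting automorphisms is legitimate in general, but in this specific situation the injectivity of the period map dissolves it; no rigidification is needed.
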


\begin{proof}
The statement is an application of a general result of Joumaah~\cite[Theorem~9.10]{joumaah}. However, in our situation it is more enlightening to write down a direct argument following the same idea. To any triple $(X,\sigma,\iota)\in \cM_{\langle 2\rangle}^\rho$ we associate as in~\cite[\S4]{bcs} the $20$-dimensional local deformation space $\cX\to\Def(X,\sigma,\iota)$ endowed with a holomorphic automorphism $F$ of $\cX$ extending $\sigma$ to a nonsymplectic involution on each fiber. The disjoint union $\coprod\limits_{(X,\sigma,\iota)\in\cM_{\langle 2\rangle}^\rho} \Def(X,\sigma,\iota)$ is glued by the equivalence relation given by the period map $\cP_{\langle 2\rangle}\colon\cM_{\langle 2\rangle}\to \cD_{\langle 2\rangle}$. Given two such deformations $F\colon \cX\to\cX$ over a base $U$ and $F'\colon \cX'\to\cX'$ over $U'$, the restrictions of $F$ and $F'$ over the intersection $U\cap U'$ are equal since the period map is injective: the nonsymplectic involution is uniquely determined by the period. So, glueing $\cX$ and $\cX'$ over $U\cap U'$, we can extend $F$ and $F'$ as a holomorphic automorphism over $U\cup U'$, and finally over $\cM_{\langle 2\rangle}$, which is connected, so we get the result.
\end{proof}

The invariant lattice of an automorphism is a topological invariant, thus invariant by deformation. We get as corollary of Theorem~\ref{th:ample} and Theorem~\ref{th:defo}
the following result, which first appeared in Ferretti~\cite[Proposition 4.1]{ferretti}, but the point of view developed in this paper provides a more direct proof.

\begin{cor}\label{cor:defo}
Let $X$ be an IHS manifold of type $\KKK^{[2]}$ with a nonsymplectic involution $\sigma$.
The following assertions are equivalent:
\begin{enumerate}
\item $\rank\Inv(\sigma)=1$.
\item $(X,\sigma)$ is deformation equivalent to a Beauville involution.
\item $(X,\sigma)$ is deformation equivalent to an O'Grady involution.
\end{enumerate}
\end{cor}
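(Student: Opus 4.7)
The strategy is to combine the topological invariance of $\rank\Inv(\sigma)$ under deformation with Theorem~\ref{th:defo}, which makes $\cM_{\langle 2\rangle}^\rho$ a single deformation class. For the implications $(2)\Rightarrow(1)$ and $(3)\Rightarrow(1)$ I rely on the following general fact: in a smooth proper family $\pi\colon\cX\to\Delta$ equipped with a fiber-preserving automorphism $F$, parallel transport of the local system $R^2\pi_*\IZ$ intertwines the pullback actions $F^*_t$, so $\rank\Inv(F_t)$ is locally constant and hence, since $\Delta$ is connected, globally constant on $\Delta$. Thus it suffices to exhibit at least one Beauville and at least one O'Grady involution with rank-one invariant lattice. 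For Beauville this is built into Definition~\ref{def:beauville}. For O'Grady, the point is that for $A$ generic in the Lagrangian Grassmannian, $\Pic(X_A)$ is generated by the ample invariant square-$2$ class, so $\Inv(\sigma_A)\subseteq\Pic(X_A)$ also has rank one.

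For the reverse implications $(1)\Rightarrow(2)$ and $(1)\Rightarrow(3)$, I apply Theorem~\ref{th:ample}(\ref{th:ample_item2}) to write $\Inv(\sigma)=\IZ[D]$ for an ample divisor $D$ of square $2$, with $\sigma^*$ acting as the reflection in $[D]$. Sending the generator of $\langle 2\rangle$ to $[D]$ defines an embedding $\iota\colon\langle 2\rangle\hookrightarrow\NS(X)$ for which $\sigma^*$ is conjugate to $\rho$ in $\Orth(L)$; hence $(X,\sigma,\iota)$ is a point of $\cM_{\langle 2\rangle}^\rho$. The identical construction applied to a fixed Beauville (respectively O'Grady) involution, together with its canonical invariant ample square-$2$ class, produces a second point of $\cM_{\langle 2\rangle}^\rho$. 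Theorem~\ref{th:defo} then yields a deformation linking the two, proving $(1)\Rightarrow(2)$ and $(1)\Rightarrow(3)$ simultaneously.

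The only mildly delicate point is verifying, in the easier direction, that the generic O'Grady involution has rank-one invariant lattice: this rests on the standard fact that in a $20$-dimensional family of IHS manifolds of type $\KKK^{[2]}$ carrying a fixed polarization of square $2$, the generic member has Picard rank one. Beyond this, the argument is essentially a direct packaging of the machinery already developed in Sections~\ref{s:moduli} and~\ref{ss:defo}.
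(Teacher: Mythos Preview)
Your argument is correct and follows the same route as the paper: the invariant lattice is a deformation invariant, Theorem~\ref{th:ample} places any rank-one involution in $\cM_{\langle 2\rangle}^\rho$, and Theorem~\ref{th:defo} makes that space a single deformation class. The paper treats all of this as immediate and gives no detailed proof, so your write-up is simply a more explicit version of what the authors sketch in the sentence preceding the corollary; in particular, your genericity check that some O'Grady involution has rank-one invariant lattice is the only detail the paper leaves entirely implicit (it is absorbed into the assertion that the O'Grady family is dense in $\cM_{\langle 2\rangle}^\rho$).
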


%%%%%%%%%%%%%%%%%%%%%%%%%%%%%%%%%%%%%%%%%%%%%%%%%%%%%%%%%%%%%%%%%%%%%%
\section{New geometric constructions of nonsymplectic involutions}\label{s:new}

\subsection{Double Beauville involutions}

Let $S$ be a complex projective K3 surface of Picard number $1$, with a very ample polarization $H$ of square $H^2=2t$, ${t\geq 2}$. Using the Global Torelli theorem (see Theorem~\ref{th:torelli}), Boissi\`ere--Cattaneo--Nieper-Wi{\ss}kirchen--Sarti~\cite[Theorem 1.1]{bcns} proved that the Hilbert square~$S^{[2]}$ admits a nontrivial automorphism if and only if the following arithmetic conditions are satisfied:
\begin{itemize}
\item $t$ is not a square,
\item Pell's equation $P_t(-1)\colon x^2-ty^2=-1$ admits a solution,
\item Pell's equation $P_{4t}(5)\colon x^2-4ty^2=5$ has no solution.
\end{itemize}
In this case, $S^{[2]}$ admits a unique nontrivial automorphism, which is a nonsymplectic involution. Denote $h$ the class of $H$ in the splitting $\NS(S^{[2]})\cong \NS(S)\oplus \IZ\delta$ (see~Section~\ref{ss:beauville}) and by $(a,b)$ the minimal positive solution of $P_t(-1)$. The involution acts on $\HH^2(S^{[2]},\IZ)$ as the reflection in the class $D=bh-a\delta$, which is ample of square~$2$, so this gives a point in the moduli space~$\cM_{\langle 2 \rangle}^\rho$. However it is hard to produce a geometric realization of this involution if $t\neq 2$, this is the goal of this section.

It is easy to check that the arithmetic assumptions above are satisfied in particular if $t=(2\alpha+1)^2+1$, with $\alpha\geq 1$: the minimal solution of $P_t(-1)$ is $(2\alpha+1,1)$ and $P_{4t}(5)$ has no solution modulo $8$. We denote by $\Sigma_\alpha$ any K3 surface of Picard number $1$ and polarization of square $(2\alpha+1)^2+1$, by $\sigma_\alpha$ the nonsymplectic involution on $\Sigma_\alpha^{[2]}$ and by $L_\alpha\coloneqq\IZ h_1\oplus\IZ h_2$ the rank two lattice with Gram matrix 
$$
\begin{pmatrix} 4 & 4+2\alpha\\ 4+2\alpha & 4\end{pmatrix}.
$$

\begin{theorem} For any $\alpha\geq $1, the pair $(\Sigma_\alpha^{[2]},\sigma_\alpha)$   deforms to the pair $(S_\alpha^{[2]},\kappa_\alpha)$ where $S_\alpha$ is a K3 surface with Picard lattice $L_\alpha$ and $\kappa_\alpha\coloneqq\sigma^1_\alpha\sigma^2_\alpha\sigma^1_\alpha$, where $\sigma^1_\alpha,\sigma^2_\alpha$ are two Beauville involutions on $S_\alpha^{[2]}$. The deformation follows a deformation path of polarized K3 surfaces from $\Sigma_\alpha$ to $S_\alpha$.
\end{theorem}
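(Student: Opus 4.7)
The plan is to identify the action of $\kappa_\alpha$ on $\HH^2(S_\alpha^{[2]},\IZ)$ as a reflection in an explicit ample class of square $2$, thereby viewing $(S_\alpha^{[2]},\kappa_\alpha)$ as the nonsymplectic involution attached to an ample $\langle 2\rangle$-polarization by Theorem~\ref{th:ample}, and then realize the deformation through a path inside the moduli space $\cM_{2t}$ of primitively polarized K3 surfaces of degree $2t$ joining $(\Sigma_\alpha,H)$ and $(S_\alpha,H_\alpha)$ for a suitable $H_\alpha\in L_\alpha$.

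Set $D_i\coloneqq h_i-\delta\in\NS(S_\alpha^{[2]})$, the ample class of square $2$ satisfying $(\sigma_\alpha^i)^\ast=s_{D_i}$, where $s_D(v)=\langle v,D\rangle D-v$ is the reflection in $D$. Using $\langle D_1,D_2\rangle=h_1\cdot h_2+\delta^2=2+2\alpha$, one computes $s_{D_1}(D_2)=H_\alpha-(2\alpha+1)\delta$ with $H_\alpha\coloneqq(2+2\alpha)h_1-h_2\in\NS(S_\alpha)$. Since conjugation of a reflection by a reflection is again a reflection,
$$
\kappa_\alpha^\ast \;=\; s_{D_1}\circ s_{D_2}\circ s_{D_1} \;=\; s_{s_{D_1}(D_2)} \;=\; s_{D_\kappa}, \qquad D_\kappa\coloneqq H_\alpha-(2\alpha+1)\delta.
$$
The class $D_\kappa$ is primitive in $\NS(S_\alpha^{[2]})$ (its $h_2$-coefficient is $-1$), has square $2$ (reflections are isometries), and is ample because it is the image of the ample class $D_2$ under the automorphism $\sigma_\alpha^1$. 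Hence $\Inv(\kappa_\alpha)=\IZ D_\kappa$ has rank~$1$, and by Corollary~\ref{prop:modspace}, $(S_\alpha^{[2]},\kappa_\alpha)$ represents a point of $\cM_{\langle 2\rangle}^\rho$. A direct computation gives $H_\alpha^2=8\alpha^2+8\alpha+4=2t$, so $H_\alpha$ is a primitive ample class on $S_\alpha$ of the same degree as the polarization $H$ of $\Sigma_\alpha$.

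To construct the deformation, I would choose a smooth analytic path $\gamma\colon\Delta\to\cM_{2t}$ in the irreducible moduli space of primitively polarized K3 surfaces of degree $2t$ joining $(\Sigma_\alpha,H)$ to $(S_\alpha,H_\alpha)$, pull back the universal family to obtain $(\mathcal{S}\to\Delta,\cH)$, and form the relative Hilbert square $\mathcal{S}^{[2]}\to\Delta$ together with the class $\mathcal{D}\coloneqq\cH-(2\alpha+1)\delta$, which is fiberwise of square $2$ and ample at both endpoints. Applying Theorem~\ref{th:ample}(\ref{th:ample_item1}) fiberwise produces a nonsymplectic involution $\tau_t$ on each $\mathcal{S}_t^{[2]}$ acting as the reflection in $\mathcal{D}_t$; the gluing-and-uniqueness argument from the proof of Theorem~\ref{th:defo} (each $\tau_t$ is determined by the period and the ample class, hence varies holomorphically) shows that the $\tau_t$ assemble into a relative automorphism of $\mathcal{S}^{[2]}\to\Delta$. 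At the endpoints, $\tau_0=\sigma_\alpha$ because $D_\sigma=H-(2\alpha+1)\delta$, and $\tau_1=\kappa_\alpha$ by the first step. The main difficulty is to ensure that $\mathcal{D}_t$ remains ample along $\gamma$: an intermediate Picard-rank jump could produce a $(-2)$-class on $\mathcal{S}_t^{[2]}$ and hence a wall of the ample cone separating $D_\sigma$ from $D_\kappa$. This is handled by perturbing $\gamma$ away from the countable union of Noether--Lefschetz divisors in $\cM_{2t}$ on which ampleness of $\mathcal{D}_t$ could fail; since the endpoints already lie in the corresponding open subset, a generic path between them remains inside it throughout.
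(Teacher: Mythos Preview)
Your computation that $\kappa_\alpha^\ast$ is the reflection in $D_\kappa=((2+2\alpha)h_1-h_2)-(2\alpha+1)\delta$ agrees with the paper, and the observation that $D_\kappa$ is ample as the image of the ample class $D_2$ under the automorphism $\sigma_\alpha^1$ is in fact cleaner than the paper's appeal to Theorem~\ref{th:ample}. The genuine gap is that you take the two Beauville involutions, with actions $s_{h_i-\delta}$, as given. By Definition~\ref{def:beauville} this requires each $h_i$ to be \emph{very ample} and the corresponding quartic $\varphi_{|h_i|}(S_\alpha)\subset\IP^3$ to contain no line, and establishing this is a substantial part of the paper's proof. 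The key preliminary step is that $L_\alpha$ contains no $(-2)$-classes (writing $d^2=(4+\alpha)(x+y)^2-\alpha(x-y)^2$ one sees $d^2=-2$ is impossible), so the ample cone of $S_\alpha$ is the whole positive cone; Saint-Donat's criteria then rule out base components and hyperellipticity of $|h_i|$, and the absence of $(-2)$-curves forces $\varphi_{|h_i|}$ to be an embedding whose image contains no line. The same lemma gives very ampleness of $H_\alpha=(2+2\alpha)h_1-h_2$, which you assert without proof in order to place $(S_\alpha,H_\alpha)$ in $\cM_{2t}$; ampleness of $D_\kappa$ on $S_\alpha^{[2]}$ does not by itself imply ampleness of $H_\alpha$ on $S_\alpha$.

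For the deformation the paper takes the shorter route of invoking Theorem~\ref{th:defo}: once both pairs are known to lie in the connected space $\cM_{\langle 2\rangle}^\rho$ they are deformation equivalent, and the sentence about moving within the $\langle 2t\rangle$-polarized K3 moduli space is descriptive rather than an independent argument. Your explicit fiberwise construction over a path in $\cM_{2t}$ is a legitimate alternative, but the delicate point you flag---keeping $\mathcal{D}_t$ ample across Noether--Lefschetz jumps---is exactly what citing Theorem~\ref{th:defo} lets one avoid.
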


\begin{proof}
The lattice $L_\alpha$ is even and hyperbolic of rank $2$, so by Morrison~\cite[Corollary \ 2.9]{morrison} it admits a primitive embedding in the K3 lattice and there exists a projective K3 surface $S_\alpha$ such that $\NS (S_\alpha) \cong L_\alpha$.
For any  $d = x h_1 + y h_2$, $x,y\in \IZ$, we have 
$$
d^2 = 4\left( x^2 + ( 2 + \alpha) xy + y^2 \right)=(4+\alpha)(x+y)^2-\alpha(x-y)^2,
$$ 
so there are no $(-2)$-curves on $S_\alpha$. It follows by the Nakai--Moishezon criterion for ampleness (see \cite[Proposition \ 1.4]{huybrechts}) that the ample cone of $S_\alpha$ coincides with its positive cone. Recall that the positive cone is the connected component of the cone $\{d\in L_\alpha\otimes_\IZ\IR\,|\, d^2>0\}$  which contains the K\"ahler classes. Since $h_1^2>0$ and $h_2^2>0$, the first quadrant $\IZ_{>0}h_1+\IZ_{>0}h_2$ and its symmetric by the origin are contained in this cone, and without loss of generality we may assume that the K\"ahler cone intersects the first quadrant. It follows that the positive cone is given by the inequalities $y>\frac{1+\epsilon\beta}{\epsilon\beta-1}x$ with $\epsilon=\pm 1$ and $\beta=\sqrt{\frac{\alpha}{4+\alpha}}$.

\begin{lemma}\label{lem:very_ample}
The classes $h_1,h_2,(2+2\alpha)h_1-h_2$ and $(2+2\alpha)h_2-h_1$ are very ample.
\end{lemma}

\begin{proof}[Proof of Lemma~\ref{lem:very_ample}]
These four classes are clearly in the ample cone. Their associated linear systems have no base components: by \cite[\S3.8, Theorem (d)]{reid} if any of these linear systems  has  base components, then it decomposes as $aE+\Gamma$ where $a$~is an integer, $|E|$~is a free pencil and $\Gamma$~is a $(-2)$-curve such that $E\cdot\Gamma=1$, but $S_\alpha$ contains  no $(-2)$-class. These linear systems thus define regular maps 
$$
\varphi_{|h_1|}, \varphi_{|h_2|}\colon S_\alpha \longrightarrow \IP^3, \qquad \varphi_{|(2 + 2\alpha) h_1 - h_2|}, \varphi_{|(2 + 2\alpha) h_2 - h_1 |}\colon S_\alpha \longrightarrow \IP^{1+t}.
$$
Let $d$ be any of the four primitive divisors in issue. We show that $d$ is not hyperelliptic by using Saint-Donat's criterion for determining hyperelliptic divisors \cite[Theorem\ 5.2]{saintdonat} : for this it is easy to check,  by reduction modulo $2$, that there is no class $E = x h_1 + y h_2$ such that $E^2 = 0$ and $E \cdot d = 2$. It follows that the regular map $\varphi_{|d|}$ is birational onto its image, but since $S_\alpha$ contains no $(-2)$-curve it is an embedding (see~\cite[(4.2) and \S6]{saintdonat}: by the genus formula, any contracted curve has square $-2$).
\end{proof}

The maps $\varphi_{|h_i|}$ embed $S_\alpha$ in $\IP^3$ in two different ways as a quartic with no line, so $S_\alpha^{[2]}$~has two different Beauville involutions $\sigma^i_{\alpha}$, $i=1,2$.
The N\'eron--Severi lattice of $S_\alpha^{[2]}$ is isometric to $\NS (S_\alpha) \oplus \langle -2 \rangle$ and $\sigma^i_{\alpha}$ acts on $\HH^2(S_\alpha^{[2]}, \IZ)$ as the reflection in the class $h_i - \delta$.
Consider the nonsymplectic involution $\kappa_{\alpha} \coloneqq \sigma^1_{\alpha} \sigma^2_{\alpha} \sigma^1_{\alpha}$. An easy computation shows that it acts on $\HH^2(S_\alpha^{[2]}, \IZ)$ as the
reflection in the class
$$
D_{\alpha}\coloneqq ((2 + 2\alpha)h_1 - h_2) - (2\alpha + 1)\delta,
$$ 
which has square $2$ and, by Theorem \ref{th:ample}, is ample and generates the invariant lattice of $\kappa_\alpha$.
By Theorem~\ref{th:defo}, the pairs $(\Sigma_\alpha^{[2]},\sigma_\alpha)$ and $(S_\alpha^{[2]},\kappa_\alpha)$ are deformation equivalent and deform to a Beauville involution, but observe that $\kappa_\alpha$ is not a Beauville involution since the class $D_\alpha$ cannot be written as $[H']-\delta$ with $H'$ very ample of square $4$ on $S_\alpha$. The pairs $(\Sigma_\alpha,H)$ and $(S_\alpha,(2+2\alpha)h_1-h_2)$ belong to the moduli space of ample $\langle 2t\rangle$-polarized K3 surfaces and $S_\alpha$ is obtained from $\Sigma_\alpha$ by moving the period along a path which increases the N\'eron--Severi group from $\IZ h$ to $\IZ h_1\oplus \IZ h_2$.
\end{proof}

\begin{rem}\text{}
\begin{enumerate}
\item We obtain a significatively more accurate result that in Theorem~\ref{th:defo} since the deformation path is here explicit. When deforming $S_\alpha$ to $\Sigma_\alpha$, the Beauville involutions $\sigma_\alpha^1,\sigma_\alpha^2$ disappear but $\kappa_\alpha$ survives.
\item By Oguiso~\cite[Theorem 4.1]{oguiso}, if $\alpha$ is even and $\alpha>8$, the two quartic K3 surfaces $\varphi_{|h_1|}(S_\alpha)$ and $\varphi_{|h_2|}(S_\alpha)$ are isomorphic as abstract varieties, but there is no birational transformation of $\IP^3$ sending one to the other.
\end{enumerate}
\end{rem}

\subsection{Nodal K3 surfaces}\label{ss:nodesK3}
In this section we give a new geometric construction of nonsymplectic involutions on the Hilbert square of a K3 surface, using K3 surfaces with nodes and high Picard number. 

%%%%%%%%%%%%%%%%%%%%%%%%%%%%%%%%%%%%%
\subsubsection{K3 surface with one node}

Let $\widetilde S$ be a general nodal K3 surface in $\IP^4$ ($\widetilde S$~is the complete intersection of a quadric and a cubic) and $\beta\colon S\to \widetilde S$~its minimal K3 resolution. We denote by $\widetilde H$ the hyperplane section on $\widetilde S$, $H\coloneqq\beta^\ast \widetilde H$  and by $\varepsilon$ the $(-2)$-exceptional curve. We thus have
$$
\NS(S)=\IZ\widetilde H\oplus\IZ\varepsilon\cong\langle 6\rangle\oplus\langle -2\rangle.
$$
We define a birational involution $\sigma$ on $S^{[2]}$ as follows. We denote by~$p$ the node of~$\widetilde S$. Take two general points $q_1,q_2\in \widetilde S$, distinct from $p$. The family of hyperplanes through $p,q_1,q_2$ is a pencil since the general points $q_1,q_2$ impose independent conditions on the linear system $|\widetilde H|$, which is $4$-dimensional. Let $H_1,H_2$ be any two generators of this pencil, then $H_1$~cuts $\widetilde S$ along a degree six curve singular at $p$, which $H_2$~cuts in two more points $q_3,q_4$ (generically distinct). So $\{p,q_1,q_2,q_3,q_4\}$ is the base locus of the pencil. We thus define a birational involution $\sigma$ on $S^{[2]}$ by sending $\{q_1,q_2\}$ to $\{q_3,q_4\}$.

\begin{prop}\label{prop:onenode}
The involution $\sigma$ is a Beauville involution on $S^{[2]}$.
\end{prop}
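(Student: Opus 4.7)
My approach is to recognize $\sigma$ as the Beauville involution associated to a specific very ample embedding of $S$ into $\IP^3$, namely the one given by the linear system $|H - \varepsilon|$. Indeed $(H - \varepsilon)^2 = H^2 + \varepsilon^2 = 6 - 2 = 4$, so if this system is very ample it provides an embedding of $S$ as a smooth quartic surface, which can be used to construct a Beauville involution $\sigma_B$ on $S^{[2]}$ in the sense of Definition~\ref{def:beauville}; the goal is to show $\sigma = \sigma_B$.

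The verification of very ampleness and of the absence of lines on the image quartic relies entirely on the lattice structure $\NS(S) \cong \IZ H \oplus \IZ \varepsilon$ with intersection form $\mathrm{diag}(6,-2)$. The equation $6a^2 = 2b^2$ has no nontrivial integer solution, so $\NS(S)$ contains no isotropic class; by Saint-Donat's criteria this automatically excludes unassigned base components and the hyperelliptic case. The $(-2)$-classes $aH + b\varepsilon\in \NS(S)$ are the solutions of $3a^2 - b^2 = -1$, and none of them is orthogonal to $H - \varepsilon$: the joint system $6a + 2b = 0$ and $3a^2 - b^2 = -1$ is incompatible. Hence $|H-\varepsilon|$ is very ample and $\varphi_{|H-\varepsilon|}$ embeds $S$ as a smooth quartic $Q \subset \IP^3$. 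A line on $Q$ would correspond to a $(-2)$-class $aH + b\varepsilon$ with $(H-\varepsilon)\cdot(aH+b\varepsilon) = 6a + 2b = 1$, which is impossible by parity. The Beauville involution $\sigma_B$ attached to $Q$ is therefore a well-defined biregular involution of $S^{[2]}$ with invariant class $(H - \varepsilon) - \delta$.

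The next step is to identify $\sigma$ with $\sigma_B$ geometrically. A local computation at the $A_1$ singularity $p$ shows that for a general hyperplane $H'\subset \IP^4$ containing $p$, the pullback $\beta^*(H'\cap \widetilde S)$ equals (proper transform) $+\, \varepsilon$, so the proper transform lies in the class $H - \varepsilon$. Consequently the rational projection $\widetilde S \dashrightarrow \IP^3$ from $p$ coincides, through $\beta$, with $\varphi_{|H-\varepsilon|}$, and realises $Q$ as the birational image of $\widetilde S$ from $p$. Under this identification, the pencil of hyperplanes in $\IP^4$ through the plane $\langle p, q_1, q_2\rangle$ corresponds bijectively to the pencil of hyperplanes in $\IP^3$ through the line $\ell$ joining $\varphi_{|H-\varepsilon|}(q_1)$ and $\varphi_{|H-\varepsilon|}(q_2)$; the residual scheme $\{q_3, q_4\}\subset \widetilde S$ from the paper's construction is mapped to the residual intersection $\ell \cap Q \setminus \{\varphi(q_1), \varphi(q_2)\}$. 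Hence $\sigma$ and $\sigma_B$ agree on a dense open subset of $S^{[2]}$, so $\sigma = \sigma_B$ is a Beauville involution.

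The main technical point in this plan is the local multiplicity computation at the node: one has to verify that $\varepsilon$ appears with multiplicity exactly $1$ (not $2$) in the pullback of a generic hyperplane section through $p$, so that the proper transform class is $H - \varepsilon$ rather than $H - 2\varepsilon$. Once this is in place, intersection theory on $S$ does the rest: the identity $(H-\varepsilon)^2 = 4$ correctly accounts for the four base points $q_1, q_2, q_3, q_4$ lying away from $\varepsilon$, matching exactly the four residual intersections of a line with $Q$ in $\IP^3$.
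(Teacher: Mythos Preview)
Your proof is correct and follows essentially the same route as the paper: show that $|H-\varepsilon|$ embeds $S$ as a line-free quartic in $\IP^3$ via Saint-Donat's criteria and a check that no $(-2)$-class is contracted, then identify the pencil of hyperplanes through $p,q_1,q_2$ in $\IP^4$ with the pencil of hyperplanes through the line $\overline{\varphi(q_1)\varphi(q_2)}$ in $\IP^3$. The only cosmetic difference is that you dispose of base components and hyperellipticity in one stroke by observing that $\NS(S)$ has no isotropic class, whereas the paper argues these two points separately; and you spell out the projection-from-the-node interpretation (with the multiplicity-one check for $\varepsilon$) that the paper leaves implicit in the sentence ``hyperplane sections on $\widetilde S$ passing through the node $p$ correspond to divisors in $|H-\varepsilon|$''.
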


\begin{proof}
The linear system $|H-\varepsilon|$ on $S$ has no base components since $H$ is very ample. We see as in the proof of  Lemma~\ref{lem:very_ample} that the divisor $H-\varepsilon$ is not hyperelliptic, hence the regular map $\varphi_{|H-\varepsilon|}\colon S\to\IP^3$ is birational onto its image. This map contracts no $(-2)$-curve, otherwise there would exist a class $\alpha H+\beta \varepsilon$  such that $(\alpha H +\beta \varepsilon)(H-\varepsilon)=\beta+3\alpha=0$ and $(\alpha H+\beta \varepsilon)^2=6\alpha^2-2\beta^2=-2$, which is impossible. So $\varphi_{|H-\varepsilon|}$ embeds $S$ as a quartic $\Sigma$ in $\IP^3$, and it contains no line otherwise a $(-2)$-class $\alpha H+\beta\varepsilon$ would be sent to a line, so we would get $1=(\alpha H +\beta \varepsilon)(H-\varepsilon)=6\alpha+2\beta$ which has clearly no solution. 

The hyperplane sections on $\widetilde S$ passing through the node $p$ correspond to divisors in the system $|H-\varepsilon|$ on $S$, so the hyperplanes $H_1,H_2$ are sent to hyperplane sections $h_1,h_2$ of $\Sigma$ which contain the images of the points $q_1,q_2,q_3,q_4$. These four points thus lie on the line $h_1\cap h_2$: this shows that the birational involution $\sigma$ on~$S^{[2]}$ is nothing else than the Beauville involution on $\Sigma^{[2]}$.
\end{proof}

\begin{rem} It follows that the invariant lattice of $\sigma$  on $S^{[2]}$ is generated by the square $2$ ample divisor $(H-\varepsilon)-\delta$. By Bini~\cite{gil} we have $\Aut(S)=\{\id\}$, but $\Aut(S^{[2]})$  contains at least an involution. We refer to~\cite{bcns} for a similar property on K3 surfaces of Picard rank one.
\end{rem}

\subsection{K3 surfaces with several nodes}
Consider the even hyperbolic lattice
$$
R_k = \langle 4 + 2k \rangle \oplus \bigoplus_{i = 1}^k \langle -2 \rangle, 
\,\, k\leq 10.
$$
By \cite[Corollary 2.9, Remark 2.11]{morrison} there exists a K3 surface with 
N\'eron--Severi lattice  
isomorphic to $R_k$. For $k=1$ we recover the K3 surface with one node described above, for $k=2$ we have a K3 surface in $\IP^5$ with two nodes 
(a complete intersection of three quadrics). 

The condition on $k$ comes from the observation that if $k>10$, $R_k$ cannot be the N\'eron--Severi group of a K3 surface: indeed in this situation 
 the N\'eron--Severi lattice has rank at least $12$ and discriminant group of length $k+1$, so that the rank of the 
transcendental lattice is at most $10$ with a discriminant group  again of length $k+1$, which is impossible. If $k>10$, the lattice $R_k$ can only be a sublattice of the N\'eron--Severi group, see~\cite[Theorem 2.7]{rmj}
for some examples when $k=16$. 

Denote by $H_k$ the generator of the summand $\langle 4 + 2k \rangle$ and by $\varepsilon_i$ the generator of the $i$-th copy of the lattice $\langle -2 \rangle$, with $i=1,\ldots, k$. The lattice $R_k$ is the N\'eron--Severi group of the K3 surface $S_k$ obtained as the minimal resolution of a K3 surface $\widetilde S_k$ embedded in $\IP^{k + 3}$ as a surface of degree $4 + 2k$, with $k$ singularities of type $A_1$ at points $p_1, \ldots, p_k$. The curves $\varepsilon_i$, $i=1,\ldots, k$, correspond to the exceptional divisors obtained by the blowup of the singular points. We define similarly as above a birational involution on $S_k^{[2]}$.
Take two general points $q_1,q_2\in \widetilde S_k$ distinct from $p_1,\ldots,p_k$. The family of hyperplanes through $p_1,\ldots,p_k,q_1,q_2$ has dimension $(k+3)-(k+2)=1$. Let $H_1,H_2$ be any two generators of this pencil. Then $H_1$~cuts $\widetilde S_k$ along a degree $4+2k$ curve with nodes at the points $p_1,\ldots,p_k$. The divisor~$H_2$ cuts this curve twice in the singular points $p_1,\ldots,p_k$ and once at $q_1,q_2$ so it cuts the curve  in two other points $q_3,q_4$ (generically distinct). We thus define a birational involution~$\sigma$ on~$S_k^{[2]}$ by sending $\{q_1,q_2\}$ to $\{q_3,q_4\}$.

\begin{prop}
The involution $\sigma$ is a Beauville involution on $S_k ^{[2]}$.
\end{prop}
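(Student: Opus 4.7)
The plan is to imitate the one-node argument of Proposition~\ref{prop:onenode} with $D \coloneqq H_k - \varepsilon_1 - \cdots - \varepsilon_k$. First I would show that $\varphi_{|D|}$ is a closed embedding of $S_k$ onto a smooth quartic $\Sigma \subset \IP^3$ containing no line, which is the technical heart. Then I would identify $\sigma$ with the Beauville involution on $\Sigma^{[2]}$ transferred through the isomorphism $S_k \cong \Sigma$. A direct computation gives $D^2 = (4+2k) - 2k = 4$, and the single key observation is that for any class $x = \alpha H_k + \sum_i \beta_i \varepsilon_i \in R_k$, the intersection $x \cdot D = (4+2k)\alpha + 2\sum_i \beta_i$ is always even; this parity drives every obstruction check.

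The three obstructions to $\varphi_{|D|}$ being an embedding are handled as follows. By Reid's criterion \cite[\S3.8, Theorem (d)]{reid}, a base component of $|D|$ would force the existence of a class $E$ with $E^2 = 0$ and $E \cdot D = 1$, impossible by parity. Saint-Donat's hyperelliptic criterion \cite[Theorem 5.2]{saintdonat} requires an $E$ with $E^2 = 0$ and $E \cdot D = 2$: reducing $(2+k)\alpha^2 = \sum_i \beta_i^2$ modulo $2$ gives $k\alpha \equiv \sum_i \beta_i \pmod 2$, while $E \cdot D = 2$ forces $k\alpha + \sum_i \beta_i \equiv 1 \pmod 2$, and adding these yields $0 \equiv 1$. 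The same mod-$2$ manipulation rules out any $(-2)$-curve $C$ contracted by $\varphi_{|D|}$, which would satisfy $C^2 = -2$ and $C \cdot D = 0$. Hence $\varphi_{|D|}$ is a closed embedding onto a smooth quartic $\Sigma$, and a line in $\Sigma$ would pull back to a $(-2)$-class with $D$-intersection $1$, once more excluded by parity.

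Finally I would identify $\sigma$ with the Beauville involution geometrically. Divisors in $|D|$ correspond under $\beta$ to hyperplane sections of $\widetilde S_k \subset \IP^{k+3}$ passing through all the nodes $p_1, \ldots, p_k$, hence via $\varphi_{|D|}$ to hyperplane sections of $\Sigma \subset \IP^3$. The pencil of hyperplanes in $\IP^{k+3}$ through $p_1, \ldots, p_k, q_1, q_2$ thus corresponds to the pencil of hyperplanes in $\IP^3$ containing the line through the images $\overline{q_1}, \overline{q_2} \in \Sigma$, and the residual intersection of this line with $\Sigma$ is precisely $\{\overline{q_3}, \overline{q_4}\}$, identifying $\sigma$ with the Beauville involution on $\Sigma^{[2]}$. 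The main obstacle I anticipate is the hyperellipticity check: with $k$ exceptional classes of varying signs a direct Cauchy--Schwarz approach would be delicate, but the mod-$2$ reduction short-circuits the analysis. A minor subtlety to bear in mind is that $\varepsilon_i \cdot D = 2$, so each exceptional curve is embedded as a plane conic in $\Sigma$ rather than contracted, consistent with $\Sigma$ being line-free.
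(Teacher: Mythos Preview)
Your proposal is correct and follows essentially the same route as the paper: the paper's proof is a terse sketch that points to the divisor $D=H_k-\sum_i\varepsilon_i$ of square $4$, asserts that $|D|$ is base-point free, non-hyperelliptic, and contracts no $(-2)$-curve, and then notes that ``any two divisors of the lattice $R_k$ have even intersection number'' to exclude lines---exactly the parity principle you isolate and exploit systematically. Your write-up is in fact more complete than the paper's, since you spell out how the parity of $x\cdot D$ together with the mod-$2$ reduction of the quadratic form disposes of each obstruction (base components via Reid, hyperellipticity via Saint-Donat, contracted $(-2)$-curves), whereas the paper leaves these verifications implicit.
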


\begin{proof}
The proof is similar to those of Proposition~\ref{prop:onenode}.
The divisor $H_k - \sum_{i = 1}^k \varepsilon_i$ has square $4$, its associated linear system has no base components, it is not hyperelliptic and it contracts no $(-2)$-curve. So it embeds $S_k$ as a quartic $\Sigma_k$ in $\IP^3$ which contains no line: since any two divisors of the lattice $R_k$ have even intersection number, no $(-2)$-curve is sent to a line. 

\end{proof}

\bibliographystyle{amsplain}
\bibliography{Biblio}

\providecommand{\bysame}{\leavevmode\hbox to3em{\hrulefill}\thinspace}
\providecommand{\MR}{\relax\ifhmode\unskip\space\fi MR }
% \MRhref is called by the amsart/book/proc definition of \MR.
\providecommand{\MRhref}[2]{%
  \href{http://www.ams.org/mathscinet-getitem?mr=#1}{#2}
}
\providecommand{\href}[2]{#2}
\begin{thebibliography}{10}

\bibitem{B_remarks}
A.~Beauville, \emph{Some remarks on {K}\"ahler manifolds with {$c_{1}=0$}},
  Classification of algebraic and analytic manifolds ({K}atata, 1982), Progr.
  Math., vol.~39, pp.~1--26.

\bibitem{beauville1}
\bysame, \emph{Vari\'et\'es {K}\"ahleriennes dont la premi\`ere classe de
  {C}hern est nulle}, J. Differential Geom. \textbf{18} (1983), no.~4, 755--782
  (1984).

\bibitem{gil}
G.~Bini, \emph{On automorphisms of some {K3} surfaces with picard number two},
  Annals of the Marie Curie Fellowship Association \textbf{4} (2005), 1--3.

\bibitem{BCS_ball}
S.~Boissi\`ere, C.~Camere, and A.~Sarti, \emph{Complex ball quotients from
  manifolds of ${K3}^{[n]}$-type}, \texttt{ArXiv:1512.02067}.

\bibitem{bcs}
\bysame, \emph{Classification of automorphisms on a deformation family of
  hyper-{K}\"ahler four-folds by {$p$}-elementary lattices}, Kyoto J. Math.
  \textbf{56} (2016), no.~3, 465--499.

\bibitem{bcns}
S.~Boissi\`ere, A.~Cattaneo, M.~Nieper-Wisskirchen, and A.~Sarti, \emph{The
  automorphism group of the {H}ilbert scheme of two points on a generic
  projective {K}3 surface}, K3 surfaces and their moduli, Progr. Math., vol.
  315, Birkh\"auser/Springer, [Cham], 2016, pp.~1--15.

\bibitem{alberto}
A.~Cattaneo, \emph{Automorphisms of hilbert schemes of points on a generic
  projective {K3} surface}, 2018, \texttt{arXiv:1801.05682}.

\bibitem{DM}
Olivier Debarre and Emanuele Macr\'i, \emph{On the period map for polarized
  hyperk\"ahler fourfolds}, 2017, \texttt{arXiv:1704.01439}.

\bibitem{EPW}
D.~Eisenbud, S.~Popescu, and C.~Walter, \emph{Lagrangian subbundles and
  codimension 3 subcanonical subschemes}, Duke Math. J. \textbf{107} (2001),
  no.~3, 427--467.

\bibitem{ferretti}
A.~Ferretti, \emph{The {C}how ring of double {EPW} sextics}, Algebra Number
  Theory \textbf{6} (2012), no.~3, 539--560.

\bibitem{rmj}
A.~Garbagnati and A.~Sarti, \emph{Kummer surfaces and {K}3 surfaces with
  {$(\Bbb{Z}/2\Bbb{Z})^4$} symplectic action}, Rocky Mountain J. Math.
  \textbf{46} (2016), no.~4, 1141--1205.

\bibitem{GHS}
V.~Gritsenko, K.~Hulek, and G.~K. Sankaran, \emph{Moduli spaces of irreducible
  symplectic manifolds}, Compos. Math. \textbf{146} (2010), no.~2, 404--434.

\bibitem{GHS_moduliK3}
\bysame, \emph{Moduli of {K}3 surfaces and irreducible symplectic manifolds},
  Handbook of moduli. {V}ol. {I}, Adv. Lect. Math. (ALM), vol.~24, Int. Press,
  Somerville, MA, 2013, pp.~459--526.

\bibitem{huybrechts}
D.~Huybrechts, \emph{Lectures on {K}3 surfaces}, Cambridge Studies in Advanced
  Mathematics, vol. 158, Cambridge University Press, Cambridge, 2016.

\bibitem{joumaah}
M.~Joumaah, \emph{Non-symplectic involutions of irreducible symplectic
  manifolds of {$K3^{[n]}$}-type}, Math. Z. \textbf{283} (2016), no.~3-4,
  761--790.

\bibitem{Markman_monodromyK3}
E.~Markman, \emph{On the monodromy of moduli spaces of sheaves on {$K3$}
  surfaces}, J. Algebraic Geom. \textbf{17} (2008), no.~1, 29--99.

\bibitem{Markmanintegral}
\bysame, \emph{Integral constraints on the monodromy group of the
  hyper{K}\"ahler resolution of a symmetric product of a {$K3$} surface},
  Internat. J. Math. \textbf{21} (2010), no.~2, 169--223.

\bibitem{markman}
\bysame, \emph{A survey of {T}orelli and monodromy results for
  holomorphic-symplectic varieties}, Complex and differential geometry,
  Springer Proc. Math., vol.~8, Springer, Heidelberg, 2011, pp.~257--322.

\bibitem{mongardinat}
G.~Mongardi, \emph{On natural deformations of symplectic automorphisms of
  manifolds of {$K3^{[n]}$} type}, C. R. Math. Acad. Sci. Paris \textbf{351}
  (2013), no.~13-14, 561--564.

\bibitem{morrison}
D.~R. Morrison, \emph{On {$K3$}\ surfaces with large {P}icard number}, Invent.
  Math. \textbf{75} (1984), no.~1, 105--121.

\bibitem{ogradydouble}
Kieran O'Grady, \emph{Double covers of {EPW}-sextics}, Michigan Math. J.
  \textbf{62} (2013), no.~1, 143--184.

\bibitem{4foldsEPW}
Kieran~G. O'Grady, \emph{Irreducible symplectic 4-folds and
  {E}isenbud-{P}opescu-{W}alter sextics}, Duke Math. J. \textbf{134} (2006),
  no.~1, 99--137.

\bibitem{oguiso}
K.~Oguiso, \emph{Smooth quartic {K3} surfaces and {C}remona transformations
  {II}}, 2012, \texttt{arXiv:1206.5049}.

\bibitem{reid}
M.~Reid, \emph{Chapters on algebraic surfaces}, Complex algebraic geometry
  ({P}ark {C}ity, {UT}, 1993), IAS/Park City Math. Ser., vol.~3, Amer. Math.
  Soc., Providence, RI, 1997, pp.~3--159.

\bibitem{saintdonat}
B.~Saint-Donat, \emph{Projective models of {$K3$} surfaces}, Amer. J. Math.
  \textbf{96} (1974), 602--639.

\end{thebibliography}

\end{document}